\pdfoutput=1

%
%

\documentclass{amsart}

\usepackage[bookmarks=true,%
    colorlinks=true,%
    linkcolor=blue,%
    citecolor=blue,%
    filecolor=blue,%
    menucolor=blue,%
    urlcolor=blue,%
    breaklinks=true]{hyperref}

\usepackage{bm}
\usepackage{amsthm}
\usepackage{graphicx}

\setlength{\topmargin}{-1.cm}
\setlength{\headsep}{1.6cm}
\setlength{\evensidemargin}{.7cm}
\setlength{\oddsidemargin}{.7cm}
\setlength{\textheight}{21.cm}
\setlength{\textwidth}{15.2cm}


\newcommand\myLangle{\langle~}
\newcommand\myRangle{~\rangle}



\newcommand\extraIEqual{}

\newcommand{\Id}{\mathrm{Id}}

\renewcommand{\H}{\mathbb{H}}
\newcommand{\Q}{\mathbb{Q}}
\newcommand{\Z}{\mathbb{Z}}
\newcommand{\PSL}{\mathrm{PSL}}
\newcommand{\SL}{\mathrm{SL}}

\newtheorem{theorem}{Theorem}[section]
\newtheorem{lemma}[theorem]{Lemma}

\title{All Principal Congruence Link Groups}
\author{M. D. Baker}
\author{M. Goerner}
\author{A. W. Reid}

\address{\newline
IRMAR,\newline
Universit\'e de Rennes 1,\newline
35042 Rennes Cedex,\newline
France.}
\email{mark.baker@univ-rennes1.fr }
\address{\newline 
Pixar Animation Studios, \newline
1200 Park Avenue,\newline
Emeryville, CA 94608, USA.}
\email{enischte@gmail.com}
\address{\newline
Department of Mathematics,\newline
Rice University,\newline
Houston, TX 77005, USA}
\email{alan.reid@rice.edu}

\thanks{Reid was supported in part  by a NSF grant, and The Wolfensohn Fund administered through The Institute for Advanced Study.}

\begin{document}

\begin{abstract}
We enumerate all the principal
congruence link complements in $S^3$, 
thereby answering a question of W\!.~Thurston.
\end{abstract}


\keywords{Link complement, Bianchi group, congruence subgroup}

\maketitle

%
%
%
%

\section{Introduction}

Let $d$ be a square-free positive integer, let $O_d$ denote the ring
of integers in ${\Q}(\sqrt{-d})$, and $h_d$ denote the class number of ${\Q}(\sqrt{-d})$.

Setting $Q_d={\Bbb H}^3/\PSL(2,O_d)$ to be the Bianchi
orbifold, it is  well-known that $Q_d$ is a finite volume hyperbolic
orbifold with $h_d$ cusps (see \cite{MR} Chapters 8 and 9 for example).
A non-compact finite volume hyperbolic
3-manifold $X$ is called {\em arithmetic} if $X$ and $Q_d$ are
commensurable, that is to say they share a common finite sheeted cover
(see \cite{MR} Chapter 8 for more on this).  

An important class of arithmetic 3-manifolds consists of  the {\em congruence} manifolds.
Recall that a subgroup $\Gamma<\PSL(2,O_d)$ is called a {\em congruence subgroup} if there
exists an ideal $I\subset O_d$ so that $\Gamma$ contains the 
{\em principal congruence group}:
$$\Gamma(I)=\ker\{\PSL(2,O_d)\rightarrow \PSL(2,O_d/I)\},$$

\noindent where $\PSL(2,O_d/I) = \SL(2,O_d/I)/\{\pm \Id\}$.
The largest ideal $I$ for which $\Gamma(I)<\Gamma$ is called the {\em level}
of $\Gamma$. A manifold $M$ is called {\em congruence} (resp. 
{\em principal congruence}) if $M$ is isometric to a manifold ${\Bbb H}^3/\Gamma$ where $\Gamma(I) < \Gamma < \PSL(2,O_d)$ (resp. $\Gamma=\Gamma(I)$) 
for some ideal $I$.  

In an email to the first and third authors in $2009$, W\!.~Thurston asked the following question about principal congruence link complements: \\[\baselineskip]
\noindent {\em  ``Although there are infinitely many arithmetic link complements, there are only finitely many that come from principal congruence subgroups.  Some of the examples known seem to be among the most general (given their volume) for producing lots of exceptional manifolds by Dehn filling, so I'm curious about the complete list."}\\[\baselineskip]
In this note, we enumerate all the principal congruence link complements in $S^3$, together with their levels.  Our main result is the following:

\begin{theorem}
\label{main}
The following list of $48$ pairs $(d,I)$ describes all
principal congruence subgroups $\Gamma(I) < \PSL(2,O_d)$ such that
$\H^3/\Gamma(I)$ is a link complement in $S^3${\normalfont :}

\begin{enumerate}
\item {\bf $d=1$:}~$I=\myLangle 2 \myRangle $, $\extraIEqual\myLangle 2\pm i\myRangle $, $\extraIEqual\myLangle (1\pm i)^3\myRangle $, $\extraIEqual\myLangle 3\myRangle $, $\extraIEqual\myLangle 3\pm i\myRangle $, $\extraIEqual\myLangle 3\pm 2i\myRangle $, $\extraIEqual\myLangle 4\pm i\myRangle $.
\item {\bf $d=2$:}~$I = \myLangle 1\pm \sqrt{-2}\myRangle $, $\extraIEqual\myLangle 2\myRangle $, $\extraIEqual\myLangle 2\pm \sqrt{-2}\myRangle $, $\extraIEqual\myLangle 1\pm 2\sqrt{-2}\myRangle $, $\extraIEqual\myLangle 3\pm \sqrt{-2}\myRangle $.
\item {\bf $d=3$:}~$I = \myLangle 2\myRangle $,  $\extraIEqual \myLangle 3\myRangle $, $\extraIEqual \myLangle (5\pm \sqrt{-3})/2\myRangle $, $\extraIEqual \myLangle 3\pm \sqrt{-3}\myRangle $, $\extraIEqual \myLangle (7\pm \sqrt{-3})/2\myRangle $, $\extraIEqual \myLangle 4\pm \sqrt{-3}\myRangle $, $\extraIEqual \myLangle (9\pm \sqrt{-3})/2\myRangle $.
\item {\bf $d=5$:}~$I=\myLangle 3,(1\pm \sqrt{-5})\myRangle $.
\item {\bf $d=7$:}~$I = \myLangle (1\pm \sqrt{-7})/2\myRangle $, $\extraIEqual \myLangle 2\myRangle $, $\extraIEqual \myLangle (3\pm \sqrt{-7})/2\myRangle $, $\extraIEqual \myLangle \pm \sqrt{-7}\myRangle $, $\extraIEqual \myLangle 1\pm \sqrt{-7}\myRangle $, $\extraIEqual \myLangle (5\pm \sqrt{-7})/2\myRangle $, $\extraIEqual \myLangle 2\pm \sqrt{-7}\myRangle $, $\extraIEqual \myLangle (7\pm \sqrt{-7})/2\myRangle $, $\extraIEqual \myLangle (1\pm 3\sqrt{-7})/2\myRangle $.
\item {\bf $d=11$:}~$I = \myLangle (1\pm \sqrt{-11})/2\myRangle $, $\extraIEqual \myLangle (3\pm \sqrt{-11})/2\myRangle $, $\extraIEqual \myLangle (5\pm \sqrt{-11})/2\myRangle $.
\item {\bf $d=15$:}~$I = \myLangle 2,(1\pm \sqrt{-15})/2\myRangle $, $\extraIEqual\myLangle 3,(3\pm \sqrt{-15})/2\myRangle $, $\extraIEqual\myLangle (1\pm \sqrt{-15})/2\myRangle $, $\extraIEqual\myLangle 5,(5\pm \sqrt{-15})/2\myRangle $, $\extraIEqual\myLangle (3\pm \sqrt{-15})/2\myRangle $.
\item {\bf $d=19$:}~$I =\myLangle (1\pm \sqrt{-19})/2\myRangle $.
\item {\bf $d=23$:}~$I = \myLangle 2,(1\pm \sqrt{-23})/2\myRangle $,  $\extraIEqual \myLangle 3,(1\pm \sqrt{-23})/2\myRangle $, $\extraIEqual \myLangle 4,(3\pm \sqrt{-23})/2\myRangle $.
\item {\bf $d=31$:}~$I = \myLangle 2,(1\pm \sqrt{-31})/2\myRangle $, $\extraIEqual \myLangle 4,(1\pm \sqrt{-31})/2\myRangle $, $\extraIEqual \myLangle 5,(3\pm \sqrt{-31})/2\myRangle $.
\item {\bf $d=47$:}~$I = \myLangle 2,(1\pm \sqrt{-47})/2\myRangle $, $\extraIEqual \myLangle 3,(1\pm \sqrt{-47})/2\myRangle $, $\extraIEqual \myLangle 4,(1\pm \sqrt{-47})/2\myRangle $.
\item {\bf $d=71$:}~$I =\myLangle 2,(1\pm \sqrt{-71})/2\myRangle $.
\end{enumerate}
\end{theorem}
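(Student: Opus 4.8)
The plan is to attack the statement from both sides: first derive necessary conditions that confine the admissible pairs $(d,I)$ to an explicit finite list, and then realize each survivor as a genuine link complement in $S^3$.

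\emph{Necessary conditions.} For $\H^3/\Gamma(I)$ to be a manifold at all, $\Gamma(I)$ must be torsion-free, which fails only for finitely many ideals of small norm and is readily checked from the finite group $\PSL(2,O_d/I)$. The decisive homological constraint is Alexander duality: a link $L\subset S^3$ with $c$ components has $H_1(S^3\setminus L;\Z)\cong\Z^c$, with the meridians as a basis. I would therefore require that $H_1(\H^3/\Gamma(I);\Z)$ be free abelian of rank equal to the number $c$ of cusps and generated by the peripheral (cusp) classes. Both the cusp count $c$ and the abelianization $\Gamma(I)^{\mathrm{ab}}$ are computable purely from $\PSL(2,O_d/I)=\Gamma(1)/\Gamma(I)$ together with the images of the parabolic subgroups of $\PSL(2,O_d)$, so any candidate whose $H_1$ carries torsion, or is not generated by peripheral classes, is discarded.

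\emph{Reduction to a finite search.} To make ``finitely many'' effective I would bound $N(I)$ geometrically. At the cusp of $\Gamma(I)$ over $\infty$ the peripheral lattice consists of the translations $z\mapsto z+\alpha$ with $\alpha\in I$, a lattice whose covolume grows like $N(I)$; consequently the shortest slope on the maximal cusp has length growing with $N(I)$, and since the deck group $\PSL(2,O_d/I)$ acts transitively on the cusps lying over each cusp of $Q_d$, the same growth holds at every cusp. A link complement in $S^3$ admits Dehn fillings (along the meridians) producing $S^3$, which is not hyperbolic; the $6$-theorem of Agol and Lackenby forbids this once all filling slopes exceed length $6$. This yields an explicit upper bound for $N(I)$, and the analogous normalization across the $h_d$ cusps of the Bianchi orbifold bounds $d$ as well. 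Intersecting this finite table with the necessary conditions above should cut the candidates down to the $48$ listed pairs.

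\emph{Realization.} The harder direction is the converse: each surviving pair must be shown actually to be a link complement. For this I would produce, case by case, an explicit link in $S^3$ (or a chain of drillings and fillings starting from a known census manifold) whose complement is conjecturally $\H^3/\Gamma(I)$, using the symmetry furnished by the deck group $\PSL(2,O_d/I)$ to guide the search. I would then certify rigorously, rather than merely numerically, that the candidate manifold is isometric to $\H^3/\Gamma(I)$ and that the chosen peripheral framing is the meridian system that recovers $S^3$; verified hyperbolic-structure computations supply the needed exactness. I expect the elimination direction to amount to routine finite-group and ideal arithmetic, whereas certifying the realizations, especially for the larger-norm examples, is where the main obstacle and the bulk of the computational effort will lie.
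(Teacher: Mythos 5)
Your proposal has two genuine gaps, one in the finiteness reduction and one in the elimination step.

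First, the bound on $d$. The $6$-theorem argument you describe does bound the level: if every filling slope on the maximal cusps has length greater than $6$, the filled manifold cannot be $S^3$, and this forces some nonzero $x\in I$ with $|x|\le 6$, hence $N(I)\le 36$. But this does \emph{not} bound $d$: for every square-free $d$ the ideal $\langle 2\rangle$ (say) contains the element $2$ of absolute value $2<6$, so your ``finite table'' is in fact infinite in the $d$-direction, and your per-candidate homological test cannot be run on infinitely many pairs. The paper closes this by a much deeper ingredient that your plan omits entirely: a link complement has trivial cuspidal cohomology, hence so does the Bianchi orbifold $Q_d$ it covers, and the solution of the Cuspidal Cohomology Problem (completed by Vogtmann) restricts to the fourteen values $d \in \{1,2,3,5,6,7,11,15,19,23,31,39,47,71\}$. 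Only after that does the slope/systole argument (the $6$-theorem when $h_d=1$, and an Adams--Reid systole bound when $h_d>1$, where the peripheral lattice at a cusp in a nontrivial ideal class is no longer $I$ and your transitivity argument breaks) cut the problem down to $302$ cases.

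Second, your elimination criterion is too weak, and this is exactly where the hardest part of the theorem lives. Homology cannot certify that a manifold is \emph{not} a link complement in all cases: the paper points out three pairs, $(1,\langle 4+3\sqrt{-1}\rangle)$, $(2,\langle 1+3\sqrt{-2}\rangle)$ and $(3,\langle (11+\sqrt{-3})/2\rangle)$, which are not link complements and yet have $H_1(M;\Z)/\iota_*(H_1(\partial M;\Z))$ trivial; they would survive every test you propose, so your search would terminate with more than the $48$ listed pairs and no way to finish. The paper eliminates these by a group-theoretic criterion: $\Gamma(I)$ must be generated by parabolics, equivalently $|\PSL(2,O_d)/N_d(I)|=|\PSL(2,O_d/I)|$ where $N_d(I)$ is the normal closure of the peripheral subgroups; one verifies failure by adding the peripheral generators as relations to Swan's and Page's presentations of $\PSL(2,O_d)$ and showing the quotient is too large. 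For $(2,\langle 1+3\sqrt{-2}\rangle)$ even this was delicate --- coset enumeration and abelianization methods failed, and the authors had to prove the quotient infinite via an automatic structure (MAF) --- so the elimination direction is far from the ``routine finite-group and ideal arithmetic'' you expect. Finally, for the positive direction the paper does not hunt for explicit links and certified isometries: by Perelman's Geometrization it suffices to exhibit one slope per cusp whose Dehn filling has trivial fundamental group (checked by Magma/SnapPy), which is both rigorous and vastly more tractable than your realization scheme.
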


As we will describe in \S{}\ref{sec:prelim} and~\ref{remain}, using previous work of the authors (\cite{BR1}, \cite{BR2}, \cite{Go2}), the proof of Theorem~\ref{main} can be reduced to establishing the following theorem:

\begin{theorem}
\label{main2}
When $d\in\{2,7,11\}$ the following list of pairs $(d,I)$ determine principal congruence subgroups $\Gamma(I) < \PSL(2,O_d)$ such that
${\Bbb H}^3/\Gamma(I)$ is a link complement in $S^3${\normalfont :}

\begin{enumerate}
\item {\bf $d=2$:}~$I=\myLangle 1\pm 2\sqrt{-2}\myRangle $, $\extraIEqual\myLangle 3\pm \sqrt{-2}\myRangle $.
\item {\bf $d=7$:}~$I = \myLangle \pm \sqrt{-7}\myRangle $, $\extraIEqual \myLangle (5\pm \sqrt{-7})/2\myRangle $, $\extraIEqual \myLangle 2\pm \sqrt{-7}\myRangle $, $\extraIEqual \myLangle (7\pm \sqrt{-7})/2\myRangle $, $\extraIEqual \myLangle (1\pm 3\sqrt{-7})/2\myRangle $.
\item {\bf $d=11$:}~$I = \myLangle (5\pm \sqrt{-11})/2\myRangle $.
\end{enumerate}

\noindent Furthermore, in the case when $d=2$, $\Gamma(\myLangle 1+3\sqrt{-2}\myRangle )$ is not a link group.\end{theorem}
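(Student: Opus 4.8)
\section*{Proof proposal for the final statement}

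The plan is to use the standard homological necessary condition for a cusped hyperbolic $3$-manifold to be a link complement in $S^3$: if $M=\H^3/\Gamma(I)$ is the complement of a link $L\subset S^3$, then the number of components of $L$ equals the number $k$ of cusps of $M$, and Alexander duality forces $H_1(M;\Z)\cong\Z^{k}$ --- in particular $H_1(M;\Z)$ must be torsion-free of rank \emph{exactly} $k$. Recall that the ``half-lives-half-dies'' principle already guarantees $b_1(M)\ge k$ for any such $M$, so the only ways this condition can fail are the presence of torsion or a strict inequality $b_1(M)>k$. Thus it suffices to compute $H_1(\H^3/\Gamma(\myLangle 1+3\sqrt{-2}\myRangle);\Z)=\Gamma(\myLangle 1+3\sqrt{-2}\myRangle)^{\mathrm{ab}}$ and check that it is not free abelian of rank equal to the number of cusps.

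First I would pin down the two integer invariants. Since $N(\myLangle 1+3\sqrt{-2}\myRangle)=1+9\cdot 2=19$ is prime, the ideal is prime and $O_2/\myLangle 1+3\sqrt{-2}\myRangle\cong\mathbb{F}_{19}$; hence $\Gamma(I)$ is torsion-free and $[\PSL(2,O_2):\Gamma(I)]=|\PSL(2,\mathbb{F}_{19})|=\tfrac12\cdot 19\cdot(19^2-1)=3420$. As $d=2$ has class number one, $Q_2$ has a single cusp, whose stabilizer in $\PSL(2,O_2)$ (a rank-two free abelian parabolic subgroup $\cong O_2$) surjects, via $O_2\to O_2/I\cong\mathbb{F}_{19}$, onto a parabolic subgroup of order $19$ in $\PSL(2,\mathbb{F}_{19})$. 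Therefore the regular cover $\H^3/\Gamma(I)\to Q_2$ has $3420/19=180$ cusps, and a link complement would have to be a $180$-component link with $H_1(M;\Z)\cong\Z^{180}$.

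The computational heart is the determination of $\Gamma(I)^{\mathrm{ab}}$. I would realize $\Gamma(I)$ as the kernel of the reduction map $\PSL(2,O_2)\to\PSL(2,\mathbb{F}_{19})$, take a finite presentation of the Bianchi group $\PSL(2,O_2)$, and run Reidemeister--Schreier on this index-$3420$ subgroup to obtain a presentation of $\Gamma(I)$, whose abelianization is then read off via Smith normal form; as an independent cross-check one can triangulate the corresponding cover and compute $H_1$ directly. The expected outcome is that $H_1(M;\Z)$ fails to be $\Z^{180}$ --- either it carries torsion or $b_1(M)>180$ --- which by the first paragraph rules out $M$ being a link complement in $S^3$.

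The main obstacle is purely the scale of this homology computation: producing and simplifying a Reidemeister--Schreier presentation at index $3420$ and certifying the resulting abelian invariants is delicate, and one must independently confirm both the torsion-freeness of $\Gamma(I)$ and the exact cusp count, so that the comparison $H_1(M;\Z)\neq\Z^{180}$ is genuinely conclusive rather than an artifact of a miscounted cusp or an unsimplified presentation.
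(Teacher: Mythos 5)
Your proposal addresses at most the final sentence of Theorem~\ref{main2}. The body of the theorem asserts that eight specific pairs $(d,I)$ \emph{do} give link complements in $S^3$, and a homological \emph{necessary} condition can never establish a positive case; something constructive is required. The paper proves these eight cases by building a triangulation of each $M=\H^3/\Gamma(I)$, finding a slope on each cusp such that Dehn filling along all of them produces a manifold whose fundamental group is verifiably trivial (via SnapPy and {\sc Magma}), and then invoking Perelman's Geometrization Theorem to conclude that the filled manifold is $S^3$, hence that $M$ is a link complement; in the first and third authors' approach one instead exhibits parabolic elements, one from each cusp, that generate $\Gamma(I)$. Your proposal contains no substitute for this half --- in fact the main new content --- of the theorem.

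For the remaining claim, that $\Gamma(\langle 1+3\sqrt{-2}\rangle)$ is not a link group, your numerics are correct ($N(I)=19$, index $3420$, $180$ cusps), but the strategy is precisely the one that the paper says breaks down here. As noted in \S\ref{section:notLink}, $(2,1+3\sqrt{-2})$ is one of exactly three cases in the whole census for which $H_1(M;\Z)/\iota_*(H_1(\partial M;\Z))$ is trivial, i.e.\ peripheral classes generate $H_1(M;\Z)$, so the homology of $M$ yields no obstruction; this is the very reason a different tool was needed. Equivalently, $\Gamma(I)/N_2(I)$ is a \emph{perfect} group, invisible to any abelianization, and Lemma~\ref{lemma:nasty} shows it is in fact infinite. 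Your ``expected outcome'' that $H_1(M;\Z)\not\cong\Z^{180}$ is therefore unsupported, and the paper's account indicates it is simply false: had torsion or excess rank been present, the authors (who computed integral homology of congruence manifolds systematically) would not have needed an automatic-structure computation. The argument that actually works is non-abelian: if $M$ were a link complement, then $\Gamma(I)$ would be normally generated by its peripheral subgroups, forcing $G=\PSL(2,O_2)/N_2(\langle 1+3\sqrt{-2}\rangle)$ to coincide with the finite group $\PSL(2,O_2/I)$ of order $3420$; starting from Swan's presentation of $\PSL(2,O_2)$ and adjoining the relators $t^6u^{-1}$ and $t^{19}$, the program MAF proves $G$ is infinite, a contradiction. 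To repair your proof you would have to replace the homology computation by an argument of this kind; no computation of $H_1(M;\Z)$ alone can close this case.
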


Theorem~\ref{main2} will be deduced by combining previous work  of the authors, as well as further applications of these techniques,  together with Lemma~\ref{lemma:nasty}, which deals with the  case 
$(2, \langle 1+3\sqrt{-2} \rangle)$. In contrast to the other cases, this final case required finding an automatic structure for a certain group for which we used the program Monoid Automata Factory (MAF) \cite{MAF}.

We finish the introduction with some commentary. Rather than a collaboration, this paper (and the associated technical report \cite{BGR3}) is the conclusion of overlapping efforts of the first and third authors and independently the second author.  It was suggested to the authors by Ian Agol that since Theorem~\ref{main} was proved
almost simultaneously,  a collaborative effort should be undertaken to describe the final solution. The main goal of this note is to provide a brief overview of previous work and summary of techniques that lead to Theorem~\ref{main2}.  

The proof of Theorem~\ref{main2} is largely computational and builds upon (for the most part) the techniques developed in our previous independent work to deal with earlier cases. 
This work work relied heavily on {\sc Magma} \cite{Mag}, GAP \cite{Gap} and SnapPy \cite{SnapPy}. In the light of this, we have decided to 
present the work here essentially in ``announcement form'', deferring the technical details 
including the {\sc Magma} routines and SnapPy computations of homology for congruence manifolds to the technical report \cite{BGR2} (for example see \S 14 of \cite{BGR2} which summarizes the {\sc Magma} calculations).  In particular, a complete proof of Theorem \ref{main2} (and Theorem \ref{main})  can be obtained in two ways from \cite{BGR2}: by combining Section 3 of \cite{BGR2} with either Part 1 (the work of the second author) or Part 2 (the work of the first and third authors) of \cite{BGR2}.  In addition, in a companion preprint \cite{BGR3} we describe several new principal congruence link diagrams, which we intend to update when more link diagrams for the remaining principal congruence manifolds are identified. 

We refer the reader to \cite{BR3} for further background, history and connections with other questions regarding the topology of congruence 
link complements.\\[\baselineskip]

\noindent{\bf Acknowledgements:}~{\em The work of the first and third author was developed over  multiple visits to the University of Texas and the Universit\'e de Rennes 1.  They also wish to thank the Universit\'e Paul Sabatier,
the Max Planck Institut, Bonn, I.C.T.P. Trieste and The Institute for Advanced Study for their support and  hospitality as this work unfolded over several years.
In addition they also wish to thank several people without whose help we would not have been able to complete this work: M. D. E. Conder, D. Holt, E. A. O'Brien, A. Page, M. H. Sengun and A. Williams. The second author wishes to
thank R.C. Haraway III, N. Hoffman and M. Trnkova for helpful discussions about Dirichlet domains and M. Culler and N. Dunfield for explaining how SnapPy computes homology groups for large triangulations. All of the authors also wish to thank I. Agol for his suggestion to pool our resources on this paper, and the referee for many helpful suggestions.}

\section{Preliminaries and techniques} \label{sec:prelim}

In this section, we review some earlier work that was used in \cite{BR1}, \cite{BR2}, and \cite{Go2} to produce a finite list of potential pairs $(d,I)$.

Note that if $I\subset O_d$ is an ideal and $\overline{I}\subset O_d$ the complex conjugate ideal, then $\Gamma(I)$ is a link group if and only if $\Gamma(\overline{I})$ is a link group --- since
complex conjugation induces an orientation-reversing involution of ${\Bbb H}^3/\Gamma(I)$. Hence it suffices to consider only one of the ideals $I$ and $\overline{I}$ as a candidate level for a link group.

\subsection{Reducing to finitely many cases} \label{sec:whyFinCases}

Suppose that $L\subset S^3$ is a link with $n$ components, and $M\cong S^3\setminus L$. Abusing notation and identifying $M$ with the link exterior, set $\iota: \partial M\rightarrow M$ to be the inclusion map. Now
$H_1(M;\Z)\cong \Z^n$ and $H_1(M;\Z)/\iota_* (H_1(\partial M;\Z))$ is trivial (i.e.~link complements have trivial cuspidal cohomology). Hence the solution to the 
Cuspidal Cohomology Problem (completed in \cite{Vo}) provides the following consequence for principal congruence link complements:

\begin{theorem}
\label{cusp}
Suppose that $M=\H^3/\Gamma(I)$ is homeomorphic to a link complement and $M\rightarrow Q_d$. Then $M$, and hence $Q_d$,  has trivial cuspidal cohomology, and so
$$d \in \{1, 2, 3, 5, 6, 7, 11, 15, 19, 23, 31, 39, 47, 71\}.$$
\end{theorem}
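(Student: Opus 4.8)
The plan is to combine the elementary homology of link complements with the functoriality of cuspidal cohomology under finite covers, and then to invoke the solution of the Cuspidal Cohomology Problem. Recall that for a finite-volume cusped hyperbolic manifold or orbifold $X$ with boundary-at-infinity $\partial X$, the cuspidal cohomology is $H^1_{\mathrm{cusp}}(X;\Q) = \ker\bigl(\iota^*\colon H^1(X;\Q)\to H^1(\partial X;\Q)\bigr)$, and $X$ has trivial cuspidal cohomology precisely when this restriction is injective; dually, this happens exactly when the peripheral image $\iota_*\bigl(H_1(\partial X;\Q)\bigr)$ is all of $H_1(X;\Q)$.

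First I would record that $M$ itself has trivial cuspidal cohomology. As noted in the lead-in, $M\cong S^3\setminus L$ for an $n$-component link $L$ gives $H_1(M;\Z)\cong\Z^n$ generated by the meridians, and $H_1(M;\Z)/\iota_*\bigl(H_1(\partial M;\Z)\bigr)$ is trivial. Thus $\iota_*$ is already surjective on integral homology, a fortiori with $\Q$ coefficients, so $H^1_{\mathrm{cusp}}(M;\Q)=0$.

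Next I would push this vanishing down to $Q_d$ along the covering $p\colon M\to Q_d$. Since $p$ is finite, it restricts to a covering $\partial M\to \partial Q_d$ of the cusp cross-sections, and the pullbacks fit into a commuting square relating the restriction $H^1(Q_d;\Q)\to H^1(\partial Q_d;\Q)$ to $H^1(M;\Q)\to H^1(\partial M;\Q)$. Hence $p^*$ carries $H^1_{\mathrm{cusp}}(Q_d;\Q)$ into $H^1_{\mathrm{cusp}}(M;\Q)$. Over $\Q$ the transfer satisfies $\mathrm{tr}\circ p^*=(\deg p)\cdot\mathrm{id}$, so $p^*$ is injective; combined with $H^1_{\mathrm{cusp}}(M;\Q)=0$ this forces $H^1_{\mathrm{cusp}}(Q_d;\Q)=0$. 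One may work with the group cohomology of $\PSL(2,O_d)$ directly, passing to a torsion-free finite-index subgroup if one prefers to sidestep orbifold subtleties, as the transfer argument is insensitive to this.

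Finally I would invoke the resolution of the Cuspidal Cohomology Problem completed in \cite{Vo}: the Bianchi orbifolds $Q_d$ with vanishing cuspidal cohomology are exactly those for which $d$ lies in the stated fourteen-element list. This yields the claimed restriction on $d$. I expect the only genuinely delicate point to be the descent of the vanishing from $M$ to $Q_d$ --- that is, verifying the commuting square and the injectivity of $p^*$ --- since everything else is either the homological computation for links recorded above or a direct citation; the bulk of the mathematical weight rests on Vogtmann's classification, which I am treating as a black box.
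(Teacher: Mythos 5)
Your proposal is correct and follows essentially the same route as the paper: meridians generate $H_1$ of a link complement, so $\iota_*$ is surjective and the cuspidal cohomology of $M$ vanishes, and then Vogtmann's resolution of the Cuspidal Cohomology Problem pins down the list of $d$. The only difference is that you spell out the transfer/commuting-square argument for descending the vanishing from $M$ to $Q_d$, a step the paper treats as implicit in the statement ``$M$, and hence $Q_d$, has trivial cuspidal cohomology''; your filled-in detail is accurate.
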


First, note that
if  $\Gamma(I)$ is a link group, it must be a torsion-free subgroup of $\PSL(2,O_d)$. We can disregard 
the case when $I=O_d$, since the groups $\PSL(2,O_d)$ all contain elements of orders $2$ and $3$. It can then be easily checked that 
there are only $6$ pairs $(d,I)$, up to complex conjugation with $d$ as above, so that $\Gamma(I)$ contains a non-trivial element of finite order.

To pass from finitely many values of $d$ to finitely many possible pairs $(d,I)$ the norm of the ideal $I$ needs to be bounded. To achieve this, we follow the arguments of \cite{BR1} and \cite{BR2}.  We note in passing that a different combinatorial method was used in \cite{Go2} in his analysis of the cases of $d=1,3$.
When the class number is $1$, we can use the $6$-Theorem of Agol \cite{Ag} and Lackenby \cite{Lac}
to control which peripheral curves can produce $S^3$ by Dehn filling. 
When the class number is greater than $1$, an upper bound for the systole for a hyperbolic link complement in $S^3$ from \cite{AR} can be used. Since systole length grows with the norm of the ideal this provides the necessary control.
Moreover, when the class number is $1$, all ideals are principal and the argument above bounds the absolute value of a generator of the ideal, and when the class number is greater than $1$, this bounds
the absolute value of {\em some} $x\in I$.  Summarizing this discussion we obtain (see \cite[Section 4.1]{BR1} and \cite[Lemma~4.1]{BR2}):

\begin{theorem}
\label{finitenumber}
If $(d, I)$ determines a link complement, then there must be a non-trivial $x\in I$ with $|x| < 6$ (if $h_d=1$), respectively, $|x|^2 < 39$ (if $h_d > 1$). In particular there are only finitely many pairs $(d,I)$.
\end{theorem}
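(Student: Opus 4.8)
\emph{Proof proposal.} The plan is to treat the two regimes $h_d=1$ and $h_d>1$ separately, in each case bounding the modulus of some nonzero element of $I$ by a geometric obstruction, and then to deduce finiteness from elementary ideal theory. Throughout I would use that $\Gamma(I)$ is normal in $\PSL(2,O_d)$ and torsion-free (a link group being torsion-free), so that the cusp cross-sections of $M=\H^3/\Gamma(I)$ are flat tori and the parabolic subgroup fixing $\infty$ is exactly the lattice of translations $z\mapsto z+b$ with $b\in I$; hence that cross-section is $\C/I$.

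First I would handle $h_d=1$ via the cusp at $\infty$. Normalizing as in $\PSL(2,O_d)$, the horoball $\{t\ge 1\}$ is tangent to its image under $\begin{pmatrix} 0 & -1 \\ 1 & 0 \end{pmatrix}$ and is therefore precisely invariant; on the horosphere $\{t=1\}$ the induced metric is the flat metric $|dz|^2$, so the slope determined by $x\in I$ has length exactly $|x|$. Because $Q_d$ has a single cusp, the deck group $\PSL(2,O_d)/\Gamma(I)$ acts transitively by isometries on the cusps of $M$, so all of them are isometric to $\C/I$. Since $M$ is a link complement, some Dehn filling of all its cusps yields $S^3$, which is not hyperbolic; applying the $6$-Theorem of \cite{Ag,Lac} to these disjoint embedded horoballs, at least one filling slope must have length at most $6$. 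That slope is a nonzero $x\in I$, so $|x|\le 6$, the strict inequality $|x|<6$ following from the sharp form of the hypothesis. As $I=\myLangle g\myRangle$ is principal, the shortest vector of $I$ is $g$ itself, so $|g|<6$ and $N(I)=|g|^2<36$.

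For $h_d>1$ the cusps of $M$ need no longer be mutually isometric, so instead I would bound the systole of $M$. Any loxodromic $\gamma\in\Gamma(I)$ lifts to $\SL(2,O_d)$ with trace $\tau\equiv\pm 2\pmod I$, and $\tau\ne\pm 2$ since $\gamma$ is not parabolic; hence $|\tau\mp 2|\ge s(I)$, where $s(I)$ denotes the modulus of the shortest nonzero vector of $I$, giving $|\tau|\ge s(I)-2$. Writing $\ell$ for the translation length of $\gamma$ and $\lambda$ for an eigenvalue with $|\lambda|=e^{\ell/2}$, the bound $|\tau|=|\lambda+\lambda^{-1}|\le 2\cosh(\ell/2)$ yields $\cosh(\ell/2)\ge (s(I)-2)/2$, so the systole of $M$ grows without bound as $s(I)\to\infty$. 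On the other hand, \cite{AR} supplies an explicit upper bound for the systole of any hyperbolic link complement in $S^3$; substituting this bound caps $s(I)$, and the resulting arithmetic estimate is $|x|^2<39$ for the shortest vector $x\in I$.

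Finally, finiteness follows uniformly. A nonzero $x\in I$ with $|x|^2<39$ gives $I\mid\myLangle x\myRangle$, and since $O_d$ has only finitely many lattice points in the disk of radius $\sqrt{39}$, there are finitely many such principal ideals $\myLangle x\myRangle$, each with finitely many ideal divisors $I$. Combined with the finiteness of admissible $d$ from Theorem~\ref{cusp}, only finitely many pairs $(d,I)$ survive. I expect the main obstacle to be the $h_d>1$ case: making ``the systole grows with $N(I)$'' quantitative enough to extract the explicit constant $39$ from the systole bound of \cite{AR}, together with the care needed to pin down the maximal-cusp normalization that converts the length hypothesis of the $6$-Theorem into the clean arithmetic statement $|x|<6$.
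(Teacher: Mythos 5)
Your proposal follows the same two\nobreakdash-pronged strategy as the paper, which itself only sketches this theorem and defers all details to \cite[Section 4.1]{BR1} and \cite[Lemma~4.1]{BR2}: the $6$-Theorem of Agol--Lackenby for $h_d=1$, the Adams--Reid systole bound for $h_d>1$, and finiteness from lattice-point counting combined with Theorem~\ref{cusp}. Your treatment of $h_d=1$ (cusp cross-sections $\C/I$, slope lengths $|x|$ on the height-one horoballs, and the observation that the shortest vector of a principal ideal is a generator, since every nonzero element of $O_d$ has absolute value at least $1$) and your finiteness argument (containment equals division, finitely many lattice points, finitely many divisors) are essentially the arguments behind the cited lemmas.

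However, in the $h_d>1$ case there is a genuine quantitative gap, and it is much larger than the ``care'' you budget for. Your chain of inequalities gives $s(I)-2\le|\tau|\le 2\cosh(\ell/2)$, so substituting the Adams--Reid bound $\mathrm{sys}(M)<7.35534$ yields $s(I)\le 2+2\cosh(3.67767)\approx 41.6$, i.e.\ $|x|^2\lesssim 1730$ for the shortest nonzero vector of $I$ --- not $|x|^2<39$. This is off by a factor of more than $40$, and routine tightening does not close it: the complex-length refinement $|\tau\mp 2|\le 2\cosh(\ell/2)+2$, or even the sharper observation that $\tau\mp 2$ lies in $I^2$ (so that $N(I)\le 2\cosh(\ell/2)+2$), still leaves bounds far above $39$ once one passes back to an element of $I$. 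So the sentence ``the resulting arithmetic estimate is $|x|^2<39$'' is not a substitution but precisely the content of \cite[Lemma~4.1]{BR2}, which requires a finer analysis than the trace-versus-translation-length comparison you set up. A second, smaller gap: the $6$-Theorem is stated with the hypothesis that all slopes have length strictly greater than $6$, so its negation gives only $|x|\le 6$; the strict inequality $|x|<6$ requires handling the equality case, and ``the sharp form of the hypothesis'' does not do so. Neither gap threatens the qualitative conclusion that there are finitely many pairs $(d,I)$ --- any explicit constants suffice for that --- but both affect the theorem as actually stated, and the stated constants are what feed the paper's subsequent enumeration of the $302$ residual cases.
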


Theorem \ref{finitenumber} reduces the classification of which principal congruence groups are link groups 
to a finite list, indeed there are $302$ cases (up to complex conjugation and excluding the $6$ cases that give groups containing elements of finite order).

\subsection{Proving a case is not a link complement} \label{section:notLink}

Assume that $(d,I)$ is one of the finitely many pairs provided by Theorems \ref{cusp} and \ref{finitenumber} for which we need to decide that $M=\H^3/\Gamma(I)$ is, or is not, homeomorphic to a link complement.
We first discuss the case of proving that $M$ is not a link complement. Indeed, we will show something slightly stronger, namely that $\Gamma$ is not generated by parabolic elements (equivalently its peripheral subgroups).

To describe how this is achieved, fix a collection of $\PSL(2,O_d)$-inequivalent cusps $c_i$ for $i=1,\ldots , h_d$, let $P_i$ be the peripheral subgroup of  $\PSL(2,O_d)$ fixing the cusp $c_i$, set
$P_i(I)=P_i\cap \Gamma(I)$ to be the peripheral subgroup of $\Gamma(I)$ fixing $c_i$, and
let $N_d(I)$ denote the normal closure in $\PSL(2,O_d)$ of $\{P_1(I),\ldots ,P_{h_d}(I)\}$. Note that $N_d(I) < \Gamma(I)$ since $\Gamma(I)$ is a normal subgroup of $\PSL(2,O_d)$. 
Both $M$ and $\H^3/N_d(I)$ are covering spaces of the Bianchi orbifold $Q_d$ with the covering groups given by $\PSL(2,O_d/I)$, and $\PSL(2,O_d)/N_d(I)$ respectively. Now $\Gamma(I)$ will be generated by parabolic elements 
if and only if $|\PSL(2,O_d)/N_d(I)|=|\PSL(2,O_d/I)|$. Since $|\PSL(2,O_d/I)|$ can easily be computed from a factorization of $I$ (see \cite[Section~2.1]{BR2}), this reduces the problem to determining $|\PSL(2,O_d)/N_d(I)|$.

For many small values of $d$, finite presentations of the group $\PSL(2,O_d)$ together with the matrices corresponding to the generators were computed by Swan \cite{Sw}. More recently, for the remaining values of $d$,
Page \cite{Page} computed such presentations (see \cite{BR2}).  If we add the words representing the generators of each $P_j(I)$ 
as relations to the finite presentation of $\PSL(2, O_d)$, we have a finite presentation for $\PSL(2,O_d)/N_d(I)$. From this finite presentation, we can use various techniques from computer algebra to compute (a lower bound for) the size of $\PSL(2,O_d)/N_d(I)$. 

Often, however, it is sufficient (but not always easier) to compute or estimate the homology of $M$ itself to prove that it is not a link complement. In fact, this suffices for all but the three cases $(1,4+3\sqrt{-1})$, $(2, 1+3\sqrt{-2})$, and $(3,(11+\sqrt{-3})/2)$ where $H_1(M;\Bbb Z)/\iota_*(H_1(\partial M;\Bbb Z))$ is trivial.
More recently, the second author wrote a computer program to compute and triangulate a Dirichlet domain for a Bianchi orbifold $Q_d$ and construct covers of $Q_d$ to generate a triangulation of the principal congruence manifold $M$, respectively, the congruence manifold associated to the upper unit-triangular matrices in $\PSL(2,O_d/I)$. Using this program and the fact that $H_1(M;\Bbb Z)/\iota_*(H_1(\partial M;\Bbb Z))$ cannot vanish for a cover $M\to N$ of degree  less than $|H_1(N;\Bbb Z)/\iota_*(H_1(\partial N;\Bbb Z))|$, it is feasible to compute integral homology for enough congruence manifolds to rule out all but the aforementioned three cases. Further discussion of this program and the output of these computations is contained in \cite{BGR2}.

\subsection{Proving a case is a link complement}

By Perelman's resolution of the Geometrization Conjecture, to prove that $M={\Bbb H}^3/\Gamma(I)$ is homeomorphic to a link complement in $S^3$, it is sufficient to find Dehn fillings of the manifold $M$ trivializing the fundamental group.  Thus the task is to find a collection of slopes (essential simple closed curves), one from each cusp, so that killing these words trivializes the fundamental group.
The first and third author did this by exhibiting a set of parabolic elements, one from each cusp
subgroup of N(I), that generates Gamma(I).
The second author used the computer program mentioned above to generate a triangulation of $M$ and then used SnapPy to find the slopes (using techniques similar to those already described in \cite[Section~7.3.2]{Go2}) 
for which SnapPy could then prove that the fundamental group of the Dehn-filled manifold along those slopes is trivial.  Further details are in \cite{BGR2}.

\section{The remaining cases}
\label{remain}
We now review how our previous work using the methods of \S{}\ref{sec:prelim} reduces the classification of principal congruence link groups to the cases in Theorem~\ref{main2}. 

In the case of $h_d > 1$,  the complete list of the $16$ pairs $(d,I)$ 
corresponding to principal congruence link complements was determined in \cite{BR2}.  The possible values of $d$ are $5,15,23,31,47,71$ with the levels shown in Theorem~\ref{main}. 

Concerning the case when $h_d = 1$ (i.e. $d\in\{1,2,3,7,11,19\}$), certain examples already existed in the literature (see \cite{BR1}), and using the $6$-Theorem as described in \S \ref{sec:prelim} to restrict the possible
levels, we subsequently gave $9$ new examples of principal congruence link groups in \cite {BR1}. This brought the total known when $h_d=1$ to $18$:

\begin{enumerate}
\item {\bf $d=1$:}~$I=\myLangle 2\myRangle$, $\extraIEqual \myLangle 2\pm i\myRangle$, $\extraIEqual\myLangle (1\pm i)^3\myRangle$, $\extraIEqual\myLangle 3\myRangle$.
\item {\bf $d=2$:}~$I = \myLangle 1\pm \sqrt{-2}\myRangle $, $\extraIEqual\myLangle 2\myRangle $, $\extraIEqual\myLangle 2\pm 2\sqrt{-2}\myRangle $.
\item {\bf $d=3$:}~$I = \myLangle 2\myRangle $,  $\extraIEqual \myLangle 3\myRangle $, $\extraIEqual \myLangle (5\pm \sqrt{-3})/2\myRangle $, $\extraIEqual \myLangle 3\pm \sqrt{-3}\myRangle $.
\item {\bf $d=7$:}~$I = \myLangle (1\pm \sqrt{-7})/2\myRangle $, $\extraIEqual \myLangle 2\myRangle $, $\extraIEqual \myLangle (3\pm \sqrt{-7})/2\myRangle $, $\extraIEqual \myLangle 1+\pm \sqrt{-7}\myRangle $. 
\item {\bf $d=11$:}~$I = \myLangle (1\pm \sqrt{-11})/2\myRangle $, $\extraIEqual \myLangle (3\pm \sqrt{-11})/2\myRangle $.
\item {\bf $d=19$:}~$I =\myLangle (1\pm \sqrt{-19})/2\myRangle $.
\end{enumerate}

Moreover, in the cases $d = 1,3$,  as well as identifying the cases described above, in \cite {Go2} the second author determined the complete list of pairs $(d,I)$ that yield link groups; namely those above, together with:

\begin{enumerate}
\item {\bf $d=1$:}~$I=\myLangle 3\pm i\myRangle $, $\extraIEqual\myLangle 3\pm 2i\myRangle $, $\extraIEqual\myLangle 4\pm i\myRangle $.
\item {\bf $d=3$:}~$I = \myLangle (7\pm \sqrt{-3})/2\myRangle $, $\extraIEqual \myLangle 4\pm \sqrt{-3}\myRangle $, $\extraIEqual \myLangle (9\pm \sqrt{-3})/2\myRangle $.
\end{enumerate}

The upshot of these combined works is that  $40$ pairs $(d,I)$ were determined that yield principal congruence link groups, and using a combination of the techniques described in \S \ref{sec:prelim}, all remaining cases were eliminated except for the $8$ pairs $(d,I)$ stated in Theorem~\ref{main2}, and $(2,\langle 1+3\sqrt{-2} \rangle)$.

In Table~\ref{table:MoreLinkComplements} we provide some additional information associated to the $8$ cases to be identified as link groups: 
in the second, third, and fourth columns of  Table~\ref{table:MoreLinkComplements}, we list a generator $x$ of
the ideal being considered, its 
norm and the order of $\PSL(2,O_d)/\Gamma(I)$.

\begin{table}[h]
\caption{The 8 cases where we must prove that $\Gamma(\myLangle x\myRangle)$ is a link group
\label{table:MoreLinkComplements}}
\begin{center}
\begin{tabular}{|c|c|c|c|c|}
\hline
$d$&$x$&$N(\myLangle x\myRangle)$&$|\PSL(2,O_d/\myLangle x\myRangle)|$&Number of cusps\\
\hline
$2$&$1+2\sqrt{-2}$&$9$&$324$& $36$\\
$2$&$3+\sqrt{-2}$&$11$&$660$&$60$\\
\hline
$7$&$\sqrt{-7}$&$7$&$168$&$24$\\
$7$&$(5+\sqrt{-7})/2$&$8$&$192$&$24$\\
$7$&$2+\sqrt{-7}$&$11$&$660$&$60$\\
$7$&$(7+\sqrt{-7})/2$&$14$&$1008$&$72$\\
$7$&$(1+3\sqrt{-7})/2$&$16$&$1536$&$96$\\
\hline
$11$&$(5+\sqrt{-11})/2$&$9$&$324$&$36$\\
\hline
\end{tabular}
\end{center}
\end{table}

Details of the computations establishing that these do indeed give link groups are provided in \cite{BGR2}.

\section{The final case}

As mentioned in the introduction, the final case required a technique different from the other cases to prove the finitely presented group $G=\PSL(2,O_d)/N_d(I)$ to be large enough. For the other cases, this could be shown in {\sc Magma} or GAP  either by computing $|G|$ itself or the abelianization of $G$ or a suitable subgroup of $G$. The necessary methods for this are based on Todd-Coxeter coset enumeration, Reidemeister-Schreier rewriting, and Smith Normal form. However, the final case was solved using the Monoid Automata Factory (MAF). We are very grateful to Alun Williams who helped us with this.

\begin{lemma} \label{lemma:nasty}
The principal congruence manifold ${\Bbb H}^3/\Gamma(\langle 1+3\sqrt{-2} \rangle)$ is not homeomorphic to a link complement in $S^3$.
\end{lemma}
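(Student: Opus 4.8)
The plan is to follow the strategy of \S\ref{section:notLink} and prove the stronger assertion that $\Gamma(\langle 1+3\sqrt{-2}\rangle)$ is not generated by its parabolic (peripheral) elements. Write $I=\langle 1+3\sqrt{-2}\rangle\subset O_2$ and $G=\PSL(2,O_2)/N_2(I)$. Recall from \S\ref{section:notLink} that $N_2(I)\leq\Gamma(I)$, with equality precisely when $\Gamma(I)$ is generated by parabolics, and that this occurs if and only if $|G|=|\PSL(2,O_2/I)|$. Since $N(1+3\sqrt{-2})=1+9\cdot 2=19$ is a prime that splits in $\Q(\sqrt{-2})$, we have $O_2/I\cong\mathbb{F}_{19}$ and hence $|\PSL(2,O_2/I)|=|\PSL(2,\mathbb{F}_{19})|=\frac{1}{2}\cdot 19\cdot(19^2-1)=3420$. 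It therefore suffices to show $|G|>3420$; in fact I expect to prove the cleaner statement that $G$ is infinite, which forces $N_2(I)\subsetneq\Gamma(I)$ because $\Gamma(I)$ has finite index $3420$ in $\PSL(2,O_2)$.

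The first concrete step is to assemble an explicit finite presentation of $G$. Starting from Swan's presentation of $\PSL(2,O_2)$ \cite{Sw} together with the matrices realizing its generators, I would express generators of the single peripheral subgroup $P_1(I)=P_1\cap\Gamma(I)$ (here $h_2=1$, so there is one cusp) as words in those generators and adjoin these words as relators. This yields a finite presentation of $G=\PSL(2,O_2)/N_2(I)$ on which all subsequent computation is carried out.

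For the other seven cases of Theorem~\ref{main2} the order of the analogous quotient can be pinned down in {\sc Magma} or GAP, either by Todd--Coxeter coset enumeration or by detecting an infinite abelianization of $G$ or of a finite-index subgroup. Here neither shortcut is available: the homological data vanishes (this is exactly the case for which $H_1(M;\Z)/\iota_*(H_1(\partial M;\Z))$ is trivial, so no abelian quotient of $G$ or of a suitable subgroup exhibits infiniteness), and coset enumeration against the expected index $3420$ does not terminate, as one expects for an infinite group. The plan is therefore to certify infiniteness directly by computing an automatic structure for $G$ with the Monoid Automata Factory (MAF) \cite{MAF}. MAF produces a confluent rewriting system together with a word-acceptor automaton recognizing a unique normal form for each element of $G$; once such an automaton is verified, $G$ is infinite if and only if the accepted language is infinite, i.e.\ if and only if the automaton contains a cycle that is simultaneously reachable from the start state and co-reachable to an accept state. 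Exhibiting such a cycle is a finite, decidable check that completes the argument.

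The main obstacle is precisely the step MAF is brought in to handle. Because the relevant abelian invariants are finite, there is no homomorphism onto $\Z$ (or any infinite abelian group) with which to bound $|G|$ from below, and coset enumeration carries no termination guarantee for a group one suspects is infinite. The difficulty is thus to produce \emph{any} certificate of infiniteness, and the only one we could obtain is a complete automatic structure: the hard part is coaxing the Knuth--Bendix/automata computation to terminate with a provably confluent system. Once it does, reading off an infinite normal-form language---and hence $|G|>3420=|\PSL(2,O_2/I)|$, so that $\Gamma(I)$ is not parabolically generated and $M=\H^3/\Gamma(I)$ is not a link complement---is routine.
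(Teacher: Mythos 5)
Your proposal follows the paper's proof essentially verbatim: reduce to showing $G=\PSL(2,O_2)/N_2(\langle 1+3\sqrt{-2}\rangle)$ is infinite (hence $\Gamma(I)$ is not parabolically generated), build the presentation from Swan's presentation of $\PSL(2,O_2)$ by adjoining the peripheral generators of the single cusp (the paper makes these explicit as $t^6u^{-1}$ and $t^{19}$), and certify infiniteness with an automatic structure computed by MAF. The only differences are cosmetic --- you additionally compute $|\PSL(2,\mathbb{F}_{19})|=3420$ and spell out how a word acceptor certifies infiniteness, details the paper defers to its technical report.
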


\begin{proof}
From \cite{Sw}, we have the following presentation for 
$$\PSL(2,O_2)=\langle a,t,u | a^2=(ta)^3=(au^{-1}au)^2=tut^{-1}u^{-1}=1\rangle$$
where $$a=\begin{pmatrix} 0 & 1 \\ -1 & 0 \end{pmatrix},\quad t = \begin{pmatrix}1 & 1\\ 0 & 1\end{pmatrix},\quad \mbox{ and} \quad u = \begin{pmatrix}1 & \sqrt{-2}\\ 0 & 1\end{pmatrix}.$$
Following \S{}\ref{section:notLink}, we obtain $t^6u^{-1}$ and $t^{19}$ as the two parabolic elements that normally generate $N_2(\langle 1+3\sqrt{-2}\rangle)$, giving us the following presentation
$$G=\frac{\PSL(2,O_2)}{N_2(\langle 1+3\sqrt{-2}\rangle)} = \langle a,t,u | a^2=(ta)^3=(au^{-1}au)^2=tut^{-1}u^{-1}=t^6u^{-1}=t^{19}=1\rangle.$$
We give this presentation of $G$ to MAF \cite{MAF} which proved that $G$ is infinite (see \cite[Lemma 3.2]{BGR2} for how this was coded). From the discussion in \S{}\ref{section:notLink}, we deduce that $\Gamma(\langle 1+3\sqrt{-2}\rangle)$ is not a link group. 
\end{proof}

\end{document}